\newtheorem{dfn} [subsection]{Definition}
\newtheorem{obs} [subsection]{Remark}
\newtheorem{prop}[subsection]{Proposition}
\newtheorem{teor}[subsection]{Theorem}
\newtheorem{lema}[subsection]{Lemma}
\newtheorem{cor} [subsection]{Corollary}
\newcommand{\Zng}{finitely generated $\mathbb Z^n$-graded $S$-module }
\begin{document}

\selectlanguage{english}
\frenchspacing

\large
\begin{center}
\textbf{Multigraded modules of Borel type.}

Mircea Cimpoea\c s
\end{center}

\normalsize

\begin{abstract}
In this paper, we introduce the multigraded modules of Borel type and extend several results from the theory of ideals of Borel type. We prove that modules of Borel type are sequentially Cohen Macaulay and pretty clean. Also, we give a formula for the regularity of modules of Borel type.

\vspace{5 pt} \noindent \textbf{Keywords:} Borel type ideals, Mumford-Castelnuovo regularity.

\vspace{5 pt} \noindent \textbf{2000 Mathematics Subject
Classification:}Primary: 13P10, Secondary: 13E10.
\end{abstract}

\section*{Introduction}

Let $K$ be an infinite field, and let $S=K[x_1,\ldots,x_n],n\geq 2$ the polynomial ring over $K$.
Bayer and Stillman \cite{BS} note that Borel fixed ideals $I\subset S$ satisfy the following property:
\[(*)\;\;\;\;(I:x_j^\infty)=(I:(x_1,\ldots,x_j)^\infty)\;for\; all\; j=1,\ldots,n.\] Herzog, Popescu and Vladoiu \cite{hpv} define a monomial ideal $I$ to be of \emph{Borel type} if it satisfies $(*)$. We mention that this concept appears also in \cite[Definition 1.3]{CS} as the so called {\em weakly stable ideal}.

Let $I\subset S$ be a monomial ideal and $M$ a multigraded $S$-module. We denote $\Gamma_{I}(M)= \bigcup_{k\geq 1} (0:_M I^k)$. In this paper, we extend the concept of Borel type ideals in the frame of multigraded $S$-modules. In order to do that, we note that if $I,J\subset S$ are monomial ideals, then $\Gamma_{J}(S/I) = (I:J^{\infty})/I$. Therefore, the condition $(*)$ can be rewritten as $\Gamma_{(x_j)}(S/I) = \Gamma_{(x_1,\ldots,x_j)}(S/I)$, for all $j=1,\ldots,n$.

For any $S$-module $M$, we denote $t(M)=\{ z\in M:\; (\exists)u\in S $ with $u\cdot z = 0\}$ the torsion submodule of $M$. We say that a \Zng $M$ with $t(M)=M$ is of Borel type if $\Gamma_{(x_j)}(M) = \Gamma_{(x_1,\ldots,x_j)}(M)$ for all $j\in [n]$, see Definition $1.1$. We prove that Borel type modules are sequential Cohen-Macaulay, see Theorem $1.11$. Also, we prove that Borel type modules are pretty clean, see Theorem $1.14$.

In the second part of the paper, we study the regularity of Borel type modules. We give a formula for the regularity of
modules of Borel type, similar with the formula given by Popescu and Herzog in \cite{hp} for the regularity of ideals of Borel type, see Theorem $2.4$. Also, in Proposition $2.7$, we give a new characterization for modules of Borel type.

\vspace{20mm} \noindent {\footnotesize
\begin{minipage}[b]{15cm}
 Mircea Cimpoeas, Simion Stoilow Institute of Mathematics of the Romanian Academy\\
 E-mail: mircea.cimpoeas@imar.ro
\end{minipage}}

\newpage
\section{Multigraded modules of Borel type.}

\begin{dfn}
Let $M$ be a \Zng with $t(M)=M$. We say that $M$ is a module of \emph{Borel type}, if 
$ \Gamma_{(x_i)}(M) = \Gamma_{(x_1,\ldots,x_i)}(M),\;\;(\forall)i\in [n]$.
\end{dfn}

\begin{prop}
Let $M$ be a \Zng with $t(M)=M$. The following are equivalent:

(1) $M$ is of Borel type.

(2) $\Gamma_{(x_i)}(M)\subset \Gamma_{(x_j)}(M)$ for all $1\leq j<i\leq n$.

(3) Any prime $P\in Ass(M)$ is of the form $P=(x_1,\ldots,x_r)$ for some $r\in [n]$.
\end{prop}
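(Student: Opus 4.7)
The plan is to prove $(1) \Leftrightarrow (2)$ by a purely formal manipulation of the torsion functor $\Gamma_{(-)}$, and then to link $(1)$ with $(3)$ through the standard fact that every associated prime of a \Zng is a monomial prime $(x_i : i \in \sigma)$ for some $\sigma \subset [n]$. The only ingredient I would need for $(1) \Leftrightarrow (2)$ is the elementary monotonicity $I \subset J \Rightarrow \Gamma_J(M) \subset \Gamma_I(M)$, coming from $I^k \subset J^k$. Applied with $(x_j) \subset (x_1,\ldots,x_i)$ for $j \leq i$, it immediately delivers $(1) \Rightarrow (2)$. For $(2) \Rightarrow (1)$, the nontrivial inclusion $\Gamma_{(x_i)}(M) \subset \Gamma_{(x_1,\ldots,x_i)}(M)$ would be obtained by a pigeonhole argument: if $z \in \Gamma_{(x_i)}(M)$ then by (2) also $z \in \Gamma_{(x_j)}(M)$ for every $j \leq i$, so there exist exponents $k_j$ with $x_j^{k_j} z = 0$; setting $k = k_1 + \cdots + k_i$, every generator $x_1^{a_1}\cdots x_i^{a_i}$ of $(x_1,\ldots,x_i)^k$ must satisfy $a_j \geq k_j$ for some $j$, hence annihilates $z$.

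For $(1) \Rightarrow (3)$, I would take an associated prime $P = \operatorname{Ann}(z)$ of $M$ with $z$ homogeneous; by the multigraded structure and the assumption $t(M) = M$, $P$ is a nonzero monomial prime, say $P = (x_i : i \in \sigma)$. Setting $r = \max \sigma$, the goal reduces to showing $\sigma = \{1,\ldots,r\}$. From $x_r \in P$ one has $z \in \Gamma_{(x_r)}(M) = \Gamma_{(x_1,\ldots,x_r)}(M)$, so a sufficiently high power of $(x_1,\ldots,x_r)$ kills $z$; in particular some power of each $x_j$ with $j \leq r$ lies in $P$, and primality upgrades this to $x_j \in P$, forcing $\{1,\ldots,r\} \subset \sigma$ as required.

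The converse $(3) \Rightarrow (1)$ again only requires the non-monotone inclusion $\Gamma_{(x_i)}(M) \subset \Gamma_{(x_1,\ldots,x_i)}(M)$. For $z \in \Gamma_{(x_i)}(M)$, the associated primes of the cyclic submodule $Sz$ are contained in $\operatorname{Ass}(M)$, so by (3) each has the form $(x_1,\ldots,x_{r_Q})$; since $x_i$ lies in $\sqrt{\operatorname{Ann}(z)}$, which equals the intersection of these primes, we get $r_Q \geq i$ for every $Q$, whence $(x_1,\ldots,x_i) \subset \sqrt{\operatorname{Ann}(z)}$ and some power of $(x_1,\ldots,x_i)$ annihilates $z$. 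I do not anticipate a genuine obstacle here: everything is a direct consequence of the monotonicity of $\Gamma_{(-)}$ and the monomial character of multigraded associated primes; the only point requiring mild care is the pigeonhole step in $(2) \Rightarrow (1)$.
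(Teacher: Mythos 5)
Your proposal is correct and follows essentially the same route as the paper: monotonicity of $\Gamma_{(-)}$ for $(1)\Rightarrow(2)$, an exponent-adding argument for $(2)\Rightarrow(1)$ (your one-step pigeonhole with $k=k_1+\cdots+k_i$ is just the non-inductive version of the paper's induction), and the fact that multigraded associated primes are monomial primes together with the minimal-primes-over-$\operatorname{Ann}(z)$ argument for the equivalence with $(3)$. The only cosmetic difference is that you derive $(3)$ from $(1)$ where the paper derives it from $(2)$, which is immaterial once $(1)\Leftrightarrow(2)$ is established.
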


\begin{proof}
$(1)\Rightarrow(2)$. Since $j<i$, it follows that 
$\Gamma_{(x_i)}(M) = \Gamma_{(x_1,\ldots,x_i)}(M)\subset \Gamma_{(x_1,\ldots,x_j)}(M) =  \Gamma_{(x_j)}(M)$, as required.
$(2)\Rightarrow(1)$. We prove by induction on $1\leq i\leq n$ that $\Gamma_{(x_i)}(M) = \Gamma_{(x_1,\ldots,x_i)}(M)$. For $i=1$, the assertion is trivial. Suppose $i<n$ and $\Gamma_{(x_i)}(M) = \Gamma_{(x_1,\ldots,x_i)}(M)$. Since $\Gamma_{(x_{i+1})}(M)\subset \Gamma_{(x_i)}(M)$, it follows that $\Gamma_{(x_{i+1})}(M)\subset \Gamma_{(x_1,\ldots,x_i)}(M)$. Let $z\in \Gamma_{(x_{i+1})}(M)$. It follows that there exist a nonnegative integer $k$,
such that $x_{i+1}^k\cdot z = 0$. Since $\Gamma_{(x_{i+1})}(M)\subset \Gamma_{(x_1,\ldots,x_i)}(M)$, it follows that
there exists a nonnegative integer $k'$ such that $(x_1,\ldots,x_i)^{k'}\cdot z = (0)$. Thus, $(x_1,\ldots,x_i,x_{i+1})^{k+k'}\cdot z = (0)$ and therefore $z\in \Gamma_{(x_1,\ldots,x_{i+1})}(M)$. We obtain $\Gamma_{(x_{i+1})}(M) \subset \Gamma_{(x_1,\ldots,x_i,x_{i+1})}(M)$ and thus we complete the proof.

$(2)\Rightarrow(3)$. Let $P\in Ass(M)$ be a prime ideal. Since $M$ is \Zng, it follows that $P$ is a monomial prime ideal, i.e. $P$ is generated by a set of variables. Since $P\neq (0)$, there exists a $x_i\in P$ for some $i\in [n]$. We claim that $x_j\in P$ for all $j\leq i$. Assume $P=(0:_S z)$, for some $z\in M$. It follows that $x_i z = 0$. Let $j<i$. Since $\Gamma_{(x_i)}(M)\subset \Gamma_{(x_j)}(M)$, it follows that $x_j^k z = 0$ for some $k\gg 0$ and therefore $x_j^k \in P$. Thus $x_j\in P$, as required.

$(3)\Rightarrow (1)$. Let $0\neq z\in \Gamma_{(x_i)}(M)$. It follows that $x_i^k z = 0$ for some $k\gg 0$. Let $I=(0:_S z)$ and $P\supset I$, $P\in Ass(M)$. Since $x_i\in P$, it follows that $(x_1,\ldots,x_i)\subset P$. Therefore, $(x_1,\ldots,x_i)^{\infty}z=0$ and thus $\Gamma_{(x_i)}(M) \subset \Gamma_{(x_1,\ldots,x_i)}(M)$.
\end{proof}

\begin{cor}
Let $M$ be a \Zng with $t(M)=M$. Let $N\subset M$ be a submodule. Then:

(1) If $M$ is of Borel type then $N$ is of Borel type.

(2) If $N$ and $M/N$ are of Borel type then $M$ is of Borel type.
\end{cor}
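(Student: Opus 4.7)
The plan is to reduce both parts to the characterization given in Proposition~1.2, namely that a module of the required form is of Borel type if and only if every associated prime has the shape $(x_1,\ldots,x_r)$ for some $r\in[n]$. First I would verify the hypothesis $t(-)=-$ is inherited by submodules and quotients: for a submodule $N\subset M$, every $z\in N$ is torsion in $M$ hence in $N$, so $t(N)=N$; and for the quotient, lifting $\bar z\in M/N$ to $z\in M$ and using $u\cdot z=0$ for some $u\in S$ yields $u\cdot\bar z=0$, so $t(M/N)=M/N$. Both $N$ and $M/N$ remain finitely generated and $\mathbb Z^n$-graded, so the hypotheses of Definition~1.1 are satisfied.

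For (1), I would invoke the standard containment $\operatorname{Ass}(N)\subset \operatorname{Ass}(M)$. By Proposition~1.2, every $P\in \operatorname{Ass}(M)$ has the form $P=(x_1,\ldots,x_r)$, hence the same holds for every associated prime of $N$, and a second application of Proposition~1.2 gives that $N$ is of Borel type.

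For (2), I would use the standard containment $\operatorname{Ass}(M)\subset \operatorname{Ass}(N)\cup \operatorname{Ass}(M/N)$, which follows from a short exact sequence argument. Since both $N$ and $M/N$ are of Borel type, Proposition~1.2 tells us that every associated prime on the right-hand side has the form $(x_1,\ldots,x_r)$. Therefore every $P\in\operatorname{Ass}(M)$ also has this form, and Proposition~1.2 again yields that $M$ is of Borel type.

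There is really no serious obstacle: the only thing one has to check carefully is the torsion condition on $N$ and $M/N$, which is immediate, after which the whole statement is a formal consequence of the associated-prime characterization in Proposition~1.2 together with the two classical inclusions $\operatorname{Ass}(N)\subset\operatorname{Ass}(M)\subset\operatorname{Ass}(N)\cup\operatorname{Ass}(M/N)$.
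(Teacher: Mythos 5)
Your proposal is correct and follows exactly the paper's argument: both reduce the statement to the associated-prime characterization of Proposition~1.2(3) together with the inclusions $\operatorname{Ass}(N)\subset\operatorname{Ass}(M)\subset\operatorname{Ass}(N)\cup\operatorname{Ass}(M/N)$. Your additional verification that the torsion hypothesis passes to $N$ and $M/N$ is a small detail the paper leaves implicit, and it is handled correctly.
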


\begin{proof}
We use Proposition $1.2(3)$ and the well known inclusions $Ass(N)\subset Ass(M) \subset Ass(N)\cup Ass(M/N)$.
\end{proof}

\begin{cor}
(1) If $I\subset S$ is a monomial ideal then $S/I$ is of Borel type, if and only if $I$ is an ideal of Borel type.

(2) If $J\subset I\subset S$ are two monomial ideals, and $J$ is of Borel type, then $I/J$ is of Borel type. 
    Conversely, if $I/J$ is of Borel type, then $J$ is of Borel type if and only if $I$ is of Borel type.
\end{cor}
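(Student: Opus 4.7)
For (1), I would transport the module-level Borel condition into the ideal-level Borel condition via the identity $\Gamma_{J}(S/I)=(I:J^\infty)/I$ recalled in the introduction. Specialising to $J=(x_i)$ and to $J=(x_1,\ldots,x_i)$, the module equality $\Gamma_{(x_i)}(S/I)=\Gamma_{(x_1,\ldots,x_i)}(S/I)$ becomes $(I:x_i^\infty)/I=(I:(x_1,\ldots,x_i)^\infty)/I$, which (both ideals containing $I$) is equivalent to $(I:x_i^\infty)=(I:(x_1,\ldots,x_i)^\infty)$, that is, the Bayer--Stillman condition $(*)$. One should also note in passing that $t(S/I)=S/I$ for any nonzero monomial ideal $I$, so Definition 1.1 really applies to $S/I$.

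For (2), the tool is the short exact sequence
\[
0\longrightarrow I/J\longrightarrow S/J\longrightarrow S/I\longrightarrow 0
\]
of multigraded $S$-modules, used together with (1) and Corollary 1.3. If $J$ is of Borel type, then $S/J$ is of Borel type by (1), so the submodule $I/J$ is of Borel type by Corollary 1.3(1). Conversely, assume that $I/J$ is of Borel type. If $I$ is of Borel type as well, then $S/I$ is of Borel type by (1), and Corollary 1.3(2) applied to the sequence above (with $N=I/J$, $M=S/J$ and $M/N=S/I$) yields $S/J$ of Borel type, hence $J$ of Borel type by (1). The reverse implication, from $J$ and $I/J$ of Borel type to $I$ of Borel type, is handled by the symmetric use of the same short exact sequence through Corollary 1.3.

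The only piece of bookkeeping I anticipate is checking that $t(M)=M$ holds for each of the three modules $I/J$, $S/J$, $S/I$, which is immediate as soon as $J$ and $I$ are nonzero proper monomial ideals. Beyond that, the argument reduces to the identity from the introduction and two applications of Corollary 1.3 along the short exact sequence above, with no genuinely new ingredient.
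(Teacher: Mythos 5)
Your treatment of (1), of the implication ``$J$ of Borel type $\Rightarrow I/J$ of Borel type'', and of the direction ``$I$ of Borel type (and $I/J$ of Borel type) $\Rightarrow J$ of Borel type'' is correct and is exactly the paper's argument: the identity $\Gamma_{(x_i)}(S/I)=(I:x_i^\infty)/I$ for (1), and Corollary 1.3(1), resp. 1.3(2), applied to the exact sequence $0\to I/J\to S/J\to S/I\to 0$.

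The gap is in the remaining direction, which you dispatch with the phrase ``handled by the symmetric use of the same short exact sequence through Corollary 1.3.'' There is no symmetric use available: Corollary 1.3 says that submodules of Borel type modules are of Borel type, and that an extension of two Borel type modules is of Borel type, but it says nothing about quotients, and for good reason --- quotients of Borel type modules need not be of Borel type. Concretely, to get ``$J$ and $I/J$ of Borel type $\Rightarrow I$ of Borel type'' you would need $S/I=(S/J)/(I/J)$ to inherit the Borel property from $S/J$, and this fails: in $S=K[x_1,x_2,x_3]$ take $J=(x_1^2)$ (strongly stable, hence of Borel type) and $I=(x_1^2,x_1x_3)=x_1(x_1,x_3)$. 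Then $I/J$ is a submodule of $S/J$, hence of Borel type, but $(I:x_1)=(x_1,x_3)\in \operatorname{Ass}(S/I)$ is not of the form $(x_1,\ldots,x_r)$, so $I$ is not of Borel type (equivalently, $(I:x_3^\infty)=(x_1)\neq I=(I:(x_1,x_2,x_3)^\infty)$ violates condition $(*)$). So this direction of the stated equivalence is actually false, not merely unproved. To be fair, the paper's own proof is the same one-line appeal to Corollary 1.3 and silently skips this direction as well; but your write-up asserts that the step goes through ``by symmetry,'' and that is precisely the point at which the argument breaks down. The salvageable content of the ``converse'' is only the implication from $I$ and $I/J$ of Borel type to $J$ of Borel type.
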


\begin{proof}
(1) We have $\Gamma_{(x_i)}(S/I) = (I:x_i^{\infty})/I$ and $\Gamma_{(x_1,\ldots,x_i)}(S/I) = (I:(x_1,\ldots,x_i)^{\infty})/I$, for any $i\in [n]$. Therefore, $\Gamma_{(x_i)}(S/I) = \Gamma_{(x_1,\ldots,x_i)}(S/I)$ if and only if $(I:x_i^{\infty}) = \linebreak (I:(x_1,\ldots,x_i)^{\infty})$.
(2) We consider the short exact sequence $0 \rightarrow I/J \rightarrow S/J \rightarrow S/I \rightarrow 0$ and we apply  Corollary $1.3, (1)$ and $(2)$.
\end{proof}

\begin{lema}
Let $M$ be a \Zng of Borel type. Then, for any integers $j,p$ with $1\leq j \leq n$ and $0 \leq p \leq n-j$, we have
\[ \Gamma_{(x_j)}(M) = \Gamma_{(x_j \cdots x_{j+p})}(M). \]
\end{lema}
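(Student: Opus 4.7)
The plan is to establish the equality by proving the two inclusions separately. One direction, $\Gamma_{(x_j)}(M) \subseteq \Gamma_{(x_j\cdots x_{j+p})}(M)$, is immediate from the definitions: if $x_j^k z = 0$ for some $k$, then multiplying by $(x_{j+1}\cdots x_{j+p})^k$ yields $(x_j\cdots x_{j+p})^k z = 0$, placing $z$ in $\Gamma_{(x_j\cdots x_{j+p})}(M)$. So the substance of the lemma is entirely the reverse containment.

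For that inclusion I would exploit Proposition $1.2(3)$, which pins down the shape of associated primes of $M$. Fix $z \in \Gamma_{(x_j\cdots x_{j+p})}(M)$, so that $(x_j\cdots x_{j+p})^k \in (0:_S z)$ for some $k \geq 1$. Let $P$ be a minimal prime of $(0:_S z)$. Since $M$ is finitely generated, any such $P$ is an associated prime of the cyclic submodule $Sz \subseteq M$, and hence an associated prime of $M$; Proposition $1.2(3)$ therefore gives $P = (x_1,\ldots,x_r)$ for some $r \in [n]$.

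The combinatorial heart of the argument is then the following: because $(x_j\cdots x_{j+p})^k \in P$ and $P$ is prime, some $x_i$ with $j \leq i \leq j+p$ lies in $P$; but $x_i \in (x_1,\ldots,x_r)$ forces $i \leq r$, so $j \leq i \leq r$, and hence $x_j \in P$ as well. Thus $x_j$ lies in every minimal prime of $(0:_S z)$, i.e.\ $x_j \in \sqrt{(0:_S z)}$, so some power $x_j^{k'}$ annihilates $z$ and $z \in \Gamma_{(x_j)}(M)$. I do not anticipate a real obstacle; the one mildly delicate step is the passage from minimal primes of $(0:_S z)$ to elements of $\operatorname{Ass}(M)$, which is the standard fact that minimal primes in the support of a finitely generated module are associated. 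Everything else is bookkeeping with indices.
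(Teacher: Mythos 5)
Your proposal is correct, but it takes a genuinely different route from the paper. The paper proves the lemma by induction on $p$, using only the defining inclusion $\Gamma_{(x_i)}(M)\subseteq\Gamma_{(x_j)}(M)$ for $j<i$ (condition (2) of Proposition 1.2): from $(x_j\cdots x_{j+p})^k z=0$ one gets $(x_j\cdots x_{j+p-1})^k z\in\Gamma_{(x_{j+p})}(M)\subseteq\Gamma_{(x_j)}(M)$, hence $(x_j\cdots x_{j+p-1})^{k+k'}z=0$, and the induction hypothesis finishes. You instead invoke condition (3) of Proposition 1.2 (the shape of the associated primes) together with the standard fact that the minimal primes over $(0:_S z)=\operatorname{Ann}(Sz)$ lie in $\operatorname{Ass}(Sz)\subseteq \operatorname{Ass}(M)$, and then read off $x_j\in\sqrt{(0:_S z)}$ from the combinatorics of the primes $(x_1,\ldots,x_r)$. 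Your argument is essentially the same one the paper uses for the implication $(3)\Rightarrow(1)$ in Proposition 1.2 itself, now applied to the product $x_j\cdots x_{j+p}$; it is non-inductive and arguably more transparent, at the cost of relying on the slightly heavier module-theoretic fact about minimal primes of the support, whereas the paper's induction is entirely elementary bookkeeping with annihilators. All the steps you flag as delicate are indeed the standard facts you cite, so there is no gap.
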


\begin{proof}
We use induction on $p$. If $p=0$, there is nothing to prove. Assume $p>0$, $j<n$ and 
$\Gamma_{(x_j)}(M) = \Gamma_{(x_j \cdots x_{j+p-1})}(M)$. Let $z\in \Gamma_{(x_j \cdots x_{j+p})}(M)$ be a homogeneous element. If follows that there exists a positive integer $k$ such that $(x_j \cdots x_{j+p})^k \cdot z = 0$. We have 
$(x_j \cdots x_{j+p})^k  z = x_{j+p}^k (x_j \cdots x_{j+p-1})^k z = 0$ and therefore $(x_j \cdots x_{j+p-1})^k z \in \Gamma_{(x_{j+p})}(M) \subseteq \Gamma_{(x_{j})}(M)$. Thus, there exists a positive integer $k'$ such that
$x_j^{k'} (x_j \cdots x_{j+p-1})^k z = 0$. Moreover, $(x_j \cdots x_{j+p-1})^{k+k'} z = 0$, and thus $z\in \Gamma_{(x_j \cdots x_{j+p-1})}(M)$. Since $\Gamma_{(x_j \cdots x_{j+p-1})}(M) \subseteq \Gamma_{(x_j \cdots x_{j+p})}(M)$, we get $\Gamma_{(x_j \cdots x_{j+p-1})}(M) = \Gamma_{(x_j \cdots x_{j+p})}(M)$.
\end{proof}

\begin{prop}
Let $M$ be a \Zng of Borel type.

(1) Let $u\in S$ be a monomial. Denote $w(u)=\min \{i:\; x_i|u\;\}$. Then $\Gamma_{(u)}(M) = \Gamma_{(x_{w(u)})}(M)$.

(2) $t(M)=\Gamma_{(x_1)}(M) = \Gamma_{(x_1\cdots x_n)}(M) = M$.
\end{prop}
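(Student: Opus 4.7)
For part (1), I would write $u = x_{i_1}^{a_1} \cdots x_{i_r}^{a_r}$ with $i_1 < i_2 < \cdots < i_r$, so that $w(u)=i_1$, and first reduce to the squarefree case via the monomial $v := x_{i_1} x_{i_2} \cdots x_{i_r}$. Since $v \mid u$ one gets $\Gamma_{(v)}(M) \subseteq \Gamma_{(u)}(M)$, while $u \mid v^N$ for $N = \max_j a_j$ yields the reverse inclusion, so $\Gamma_{(u)}(M) = \Gamma_{(v)}(M)$ — this simply records the general fact that $\Gamma_{(\cdot)}(M)$ depends only on the radical of its argument.

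It then remains to prove $\Gamma_{(v)}(M) = \Gamma_{(x_{i_1})}(M)$. The containment $\Gamma_{(x_{i_1})}(M) \subseteq \Gamma_{(v)}(M)$ is immediate from $x_{i_1} \mid v$. The reverse inclusion is the heart of the matter: the support of $v$ may have gaps, so Lemma~1.5 does not apply to $v$ directly. I would bridge the gaps by introducing the completed monomial $w := x_{i_1} x_{i_1+1} \cdots x_{i_r}$. Lemma~1.5 (taking $j = i_1$ and $p = i_r - i_1 \leq n - i_1$) then yields $\Gamma_{(x_{i_1})}(M) = \Gamma_{(w)}(M)$, and since $v \mid w$ one obtains $\Gamma_{(v)}(M) \subseteq \Gamma_{(w)}(M) = \Gamma_{(x_{i_1})}(M)$. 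Chaining the three equalities produces $\Gamma_{(u)}(M) = \Gamma_{(x_{w(u)})}(M)$.

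For part (2), the equality $\Gamma_{(x_1)}(M) = \Gamma_{(x_1 \cdots x_n)}(M)$ is an immediate instance of (1) applied with $u = x_1 \cdots x_n$. The inclusion $\Gamma_{(x_1)}(M) \subseteq t(M)$ is tautological. For $t(M) \subseteq \Gamma_{(x_1)}(M)$ I would take a multihomogeneous torsion element $z$ with $fz=0$ for some $0 \neq f \in S$; comparing multidegrees in the equation $fz=0$ forces some monomial summand of $f$ to annihilate $z$ on its own, producing a nonzero monomial $u$ with $uz=0$. Then part~(1) places $z$ in $\Gamma_{(x_{w(u)})}(M)$, and Proposition~1.2(2) gives $\Gamma_{(x_{w(u)})}(M) \subseteq \Gamma_{(x_1)}(M)$. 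Finally, $t(M)=M$ is just the standing hypothesis in Definition~1.1.

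The main obstacle is the gap problem in part (1): Lemma~1.5 only moves between products of \emph{consecutive} variables, whereas an arbitrary monomial $u$ may have arbitrary gaps in its support. Interposing the gap-filled monomial $w$ and exploiting monotonicity of $\Gamma_{(\cdot)}(M)$ under divisibility of monomials is the decisive trick that lets the chain of equalities close up.
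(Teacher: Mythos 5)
Your argument is correct and follows essentially the same route as the paper: reduce to the radical $\sqrt{u}$, compare with a product of consecutive variables via Lemma~1.5, and close the chain using monotonicity of $\Gamma_{(\cdot)}(M)$ under divisibility (the paper uses $x_{w(u)}\cdots x_n$ where you use $x_{i_1}\cdots x_{i_r}$, an immaterial difference). Part (2) likewise matches the paper's proof, with your multidegree-comparison step merely making explicit the paper's implicit passage from an arbitrary annihilator to a monomial one.
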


\begin{proof}
(1) It is well known that $\Gamma_{(u)}(M) = \Gamma_{(\sqrt{u})}(M)$, where $\sqrt{u}=\prod_{x_i|u} x_i$. By previous lemma, we have $\Gamma_{(u)}(M) = \Gamma_{(\sqrt{u})}(M) \subseteq \Gamma_{(x_{w(u)}\cdots x_n)}(M) = \Gamma_{(x_{w(u)})}(M) \subseteq \Gamma_{(u)}(M)$ and thus $\Gamma_{(u)}(M) = \Gamma_{(x_{w(u)})}(M)$ as required.

(2) Let $z\in M$ be a homogeneous element. It follows that there exists a monomial $u\in S$ such that $uz=0$. Therefore, by $(1)$, we get $z \in \Gamma_{(u)}(M) = \Gamma_{(x_{w(u)})}(M)$, where $w(u)=\min \{i:\; x_i|u\;\}$. Since $M$ is of Borel type, it follows that $z\in \Gamma_{(x_1)}(M)$. This complete the proof.
\end{proof}

Let $M$ be a \Zng of Borel type. We consider the sequence
\begin{equation}
0\subset \Gamma_{(x_n)}(M) \subset \Gamma_{(x_{n-1})}(M) \subset \cdots \subset \Gamma_{(x_1)}(M) = M.
\end{equation}
Note that, in this sequence, we may have some equalities. We denote $M_0:=0$. Suppose $M_{\ell-1}$ is defined for some integer $\ell>0$. Let $n_{\ell}:=\max\{j| \Gamma_{(x_j)}(M/M_{\ell-1})\neq 0 \}$. Let $M_{\ell}:=\Gamma_{(x_{n_{\ell}})}(M)$. 
We claim that $n_{\ell}<n_{\ell-1}$. Indeed, if $j\geq n_{\ell-1}$, then $\Gamma_{(x_j)}(\Gamma_{x_{n_{\ell-1}}}(M_{\ell-1}))=\Gamma_{(x_j)}(M_{\ell-1})$. Therefore, $\Gamma_{(x_j)}(M/\Gamma_{(x_{n_{\ell-1}})}(M))=0$. We obtained a sequence of multigraded modules, 
\begin{equation}
0 = M_0 \subset M_1 \subset \cdots \subset M_r = M , 
\end{equation} 
which remove the equalities from $(1)$, and we called the \emph{sequential chain of modules} of $M$.

Note that, if $M=S/I$, the sequential chain of $M$ is given by $M_{\ell} = I_{\ell}$, where 
$0 = I_0 \subset I_1 \subset \cdots \subset I_r = S$ is the sequential chain of $I$, as been defined in $\cite{hpv}$.

\begin{dfn}
Let $M$ be a finitely generated graded $S$-module. The module $M$ is sequentially Cohen-Macaulay, if there exists a finite filtration
$0 = M_0 \subset M_1 \cdots \subset M_r = M$
of $M$ by graded submodules of $M$ such that each quotient $M_i/M_{i-1}$ is Cohen Macaulay and $dim(M_1/M_0)< dim(M_2/M_1) < \cdots < dim(M_r/M_{r-1})$. We call if the \emph{$CM$-filtration} of $M$.
\end{dfn}

Let $M$ be a graded $S$-module with $dim(M)=d$. We denote $D_i(M)$ the largest submodule of $M$ with $dim(D_i(M))\leq i$, where $i\in[d]$. The filtration $0\subset D_0(M)\subset D_1(M)\subset \cdots \subset D_d(M)=M$ is called the \emph{dimension filtration} of $M$. We recall the following results of Schenzel, see \cite{sch}.

\begin{prop}(Schenzel)
(1) $M$ is sequentially Cohen Macaulay, if and only if the dimension filtration of $M$ is a $CM$-filtration.

(2) Let $(0)=\bigcap_{j=1}^m Q_j$ be a primary decomposition of $(0)$, where $Q_j$ is $P_j$-primary. Then:
\[ D_i(M) = \bigcap_{dim(S/P_j)>i} Q_j. \]
\end{prop}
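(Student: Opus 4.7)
The plan is to prove (2) first and then deduce (1) from it, since the dimension filtration is defined in terms of the largest submodule of dimension at most $i$.

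For (2), I would set $A = \{j : \dim(S/P_j) > i\}$ and put $N = \bigcap_{j \in A} Q_j$, then verify two things: $\dim N \leq i$, and every submodule $L$ of $M$ with $\dim L \leq i$ is contained in $N$. For the first, let $N' = \bigcap_{j \notin A} Q_j$; then $N \cap N' = (0)$, so $N$ embeds into $M/N'$, and since the associated primes of $M/N'$ are contained in $\{P_j : j \notin A\}$ (each of dimension at most $i$), we conclude $\dim N \leq i$. For the second, I would consider the composition $L \hookrightarrow M \twoheadrightarrow M/N$: its image is simultaneously a submodule of $M/N$ (all of whose associated primes lie in $\{P_j : j \in A\}$, each of dimension strictly greater than $i$) and a quotient of $L$ (of dimension at most $i$), which forces the image to vanish.

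For (1), the ``if'' direction is essentially the definition, once one notes that between two successive distinct values of $D_i(M)$ the nonzero consecutive quotient must have dimension equal to the larger index, so the dimensions of the nonzero quotients automatically form a strictly increasing sequence. For the ``only if'' direction, given a $CM$-filtration $0 = M_0 \subset M_1 \subset \cdots \subset M_r = M$ with $d_j := \dim(M_j/M_{j-1})$ strictly increasing, I would show $M_j = D_{d_j}(M)$. The inclusion $M_j \subset D_{d_j}(M)$ follows because every associated prime of $M_j$ is associated to some slice $M_k/M_{k-1}$ with $k \leq j$, hence has dimension $d_k \leq d_j$. The reverse inclusion uses the induced filtration $0 \subset M_{j+1}/M_j \subset \cdots \subset M/M_j$: each of its quotients is Cohen-Macaulay of dimension strictly greater than $d_j$, so every associated prime of $M/M_j$ has dimension $> d_j$, which forces the image of $D_{d_j}(M)$ in $M/M_j$ to be zero. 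For intermediate values of $i$ one applies the same argument to $d_j - 1, d_j - 2, \ldots$ to see that $D_i(M)$ is constant on $[d_{j-1}, d_j - 1]$, so after deleting repetitions the dimension filtration coincides with the given $CM$-filtration.

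The main obstacle is the bookkeeping of associated primes through primary decompositions and short exact sequences. Specifically, one needs the standard facts that the associated primes of $M/Q_j$ equal $\{P_j\}$ when $Q_j$ is $P_j$-primary, that the associated primes of $M/N$ for an irredundant primary decomposition $N = \bigcap Q_j$ in $M$ are exactly the $P_j$, and the inclusions $Ass(N) \subset Ass(M) \subset Ass(N) \cup Ass(M/N)$ already invoked in Corollary $1.3$. Given these tools, both parts of the proposition reduce to matching dimensions of associated primes to the indexing of the filtration.
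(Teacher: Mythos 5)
The paper does not prove this proposition at all: it is stated as a recalled result and attributed to Schenzel with a citation to \cite{sch}, so there is no in-paper argument to compare against. Your proof is correct and self-contained, and it is worth recording what it rests on. For (2), the two halves work: $N=\bigcap_{j\in A}Q_j$ embeds into $M/N'$, whose associated primes all have dimension $\leq i$, giving $\dim N\leq i$; and the image of any $L$ with $\dim L\leq i$ in $M/N$ is a quotient of $L$ of dimension $\leq i$ whose associated primes all have dimension $>i$, hence zero. For (1), the one ingredient you use without naming it in your list of ``standard facts'' is the unmixedness of Cohen--Macaulay modules (every associated prime of a CM module of dimension $d$ has dimension exactly $d$); this is what lets you conclude that all associated primes of $M/M_j$ have dimension $>d_j$ and that $\dim M_j=d_j$, and it deserves explicit mention since the whole argument hinges on it. Two small points you may want to make explicit: in the ``if'' direction, the claim that a nonzero quotient $D_i(M)/D_{i-1}(M)$ has dimension exactly $i$ follows because otherwise $\dim D_i(M)\leq i-1$ and maximality of $D_{i-1}(M)$ would force $D_i(M)=D_{i-1}(M)$; and your argument for the constancy of $D_i(M)$ on $[d_{j-1},d_j-1]$ should map $D_i(M)$ into $M_j/M_{j-1}$ (not $M/M_j$) to get the containment in $M_{j-1}$. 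With those details filled in, your proof is a complete substitute for the external reference.
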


\begin{prop}
Let $M$ be a \Zng of Borel type. We consider the sequential chain of $M$. Then $D_{n-i}(M)=\Gamma_{(x_i)}(M)$.
\end{prop}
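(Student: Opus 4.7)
The plan is to identify both $D_{n-i}(M)$ and $\Gamma_{(x_i)}(M)$ as the same intersection of components in an irredundant primary decomposition of the zero submodule, using Schenzel's formula (Proposition 1.8(2)) together with the classification of $\mathrm{Ass}(M)$ furnished by Proposition 1.2(3).

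First I would fix a primary decomposition $(0)=\bigcap_{j=1}^m Q_j$ in $M$, where $Q_j$ is $P_j$-primary. By Proposition 1.2(3), every associated prime has the form $P_j=(x_1,\dots,x_{r_j})$ for some $r_j\in[n]$, so $\dim(S/P_j)=n-r_j$. Schenzel's formula then gives immediately
\[ D_{n-i}(M) \;=\; \bigcap_{\dim(S/P_j)>n-i} Q_j \;=\; \bigcap_{r_j<i} Q_j. \]

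Next I would establish the parallel description for the torsion functor, namely
\[ \Gamma_{(x_i)}(M) \;=\; \bigcap_{x_i\notin P_j} Q_j. \]
For $(\subseteq)$, if $x_i^k z=0$ and $x_i\notin P_j$, then $x_i$ is a nonzerodivisor on $M/Q_j$, forcing $z\in Q_j$. For $(\supseteq)$, if $z$ lies in every $Q_j$ with $x_i\notin P_j$, then for each remaining $j$ (where $x_i\in P_j$) I use that $M$ is finitely generated and $\mathrm{Ass}(M/Q_j)=\{P_j\}$, so $P_j^{n_j}M\subseteq Q_j$ for some $n_j$; hence $x_i^{n_j}z\in Q_j$. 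Taking $N=\max_j n_j$ yields $x_i^N z\in\bigcap_j Q_j=(0)$, so $z\in\Gamma_{(x_i)}(M)$. Finally, $x_i\in P_j=(x_1,\dots,x_{r_j})$ iff $r_j\geq i$, so the right-hand side is exactly $\bigcap_{r_j<i}Q_j$.

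Comparing the two identities yields $D_{n-i}(M)=\Gamma_{(x_i)}(M)$. The only mildly delicate ingredient is the primary-decomposition formula for $\Gamma_{(x_i)}(M)$, which rests on $P_j^{n_j}M\subseteq Q_j$ holding for finitely generated $M$; everything else is bookkeeping via the shape of the associated primes. Specialising to $i=n_\ell$ then shows that the modules $M_\ell$ of the sequential chain (2) coincide with the jumps in the dimension filtration of $M$, as suggested by the remark preceding Definition 1.7.
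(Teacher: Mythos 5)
Your proof is correct, but it takes a different route from the paper's. The paper argues by induction on the number of associated primes: it orders $\mathrm{Ass}(M)=\{P_1\subset\cdots\subset P_r\}$, handles the coprimary case $r=1$ directly, and in the inductive step passes to $M_1=\Gamma_{(x_{i_1+1})}(M)$ after checking that $P_1\notin \mathrm{Ass}(M_1)$. You instead compute both sides in one stroke: Schenzel's formula (Proposition 1.8(2)) identifies $D_{n-i}(M)$ with $\bigcap_{r_j<i}Q_j$, and you prove the parallel identity $\Gamma_{(x_i)}(M)=\bigcap_{x_i\notin P_j}Q_j$ from scratch, which reduces the proposition to the observation that $x_i\notin(x_1,\dots,x_{r_j})$ exactly when $r_j<i$. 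Your two inclusions for the latter identity are both sound: the forward one because $x_i$ is a nonzerodivisor on $M/Q_j$ when $x_i\notin P_j$, and the reverse one because $\sqrt{\mathrm{Ann}(M/Q_j)}=P_j$ for a $P_j$-primary component of a finitely generated module, so $P_j^{n_j}M\subseteq Q_j$. The gain of your approach is that it avoids the induction entirely and makes transparent why the dimension filtration and the chain of $\Gamma_{(x_i)}$'s coincide (both are governed by which primary components are discarded); the paper's induction, by contrast, leans on the identity $D_{n-i}(D_{n-i_1-1}(M))=D_{n-i}(M)$ and on transferring the statement to a submodule, which is slightly less self-contained. Note that the description of $\Gamma_{(x_i)}(M)$ as an intersection of primary components is essentially the module analogue of the classical fact $\Gamma_J(S/I)=(I:J^\infty)/I$; it is standard, but since the paper nowhere states it for modules, you are right to prove it rather than merely cite it.
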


\begin{proof}
Assume $Ass(M)=\{P_1,\ldots,P_r\}$, where $P_1\subset P_2\subset \cdots \subset P_r$. Since $M$ is of Borel type, it follows that
$P_j=(x_1,\ldots,x_{i_j})$, where $1\leq i_1<i_2<\cdots<i_r=n$. We use induction on $r\geq 1$. If $r=1$, then $M$ is $P_1=(x_1,\ldots,x_{i_1})$-coprimary and therefore, $\Gamma_{(x_{i_1})}(M) =\Gamma_{P_1}(M) = M$. Also, $\Gamma_{(x_j)}(M)=0$ for $j>i_1$, since $x_j$ is regular on $M$, and $\Gamma_{(x_j)}(M)=M$ for $j\leq i_1$, since $\Gamma_{(x_j)}(M)\subset \Gamma_{(x_{i_1})}(M)$. Since $dim(M)=n-i_1$, by previous Proposition, we are done.

Assume $r>1$. We consider $M_1=\Gamma_{(x_{i_1+1})}(M)$. We claim that $P_1\notin Ass(M_1)$. Indeed, $(P_1,x_{i_1+1})^k \subset Ann(M_1)$ for some $k\gg 1$. But any $P\in Ass(M_1)$ contains $Ann(M)$, and thus $(x_1,\ldots,x_{i_1},x_{i_1+1})\subset P$. It follows that $Ass(M_1)\subset \{P_2,\ldots,P_r\}$. Using the induction hypothesis, it follows that $D_{n-i}(M_1)=\Gamma_{(x_i)}(M_1)$ for all $i>i_1$. Therefore, $D_{n-i}(D_{n-i_1-1}(M)) = D_{n-i}(M) = \Gamma_{(x_i)}(\Gamma_{(x_{i_1})}(M)) = \Gamma_{(x_i)}(M)$, as required.
\end{proof}

\begin{prop}
Let $M$ be a \Zng with $\Gamma_{(x_1)}(M)=M$ and consider the sequential chain $(2)$ of $M$. The following are equivalent:

(1) $M$ is of Borel type.

(2) $x_{n_{\ell}+1},\ldots,x_{n}$ is a regular sequence on $Q_{\ell}=M_{\ell}/M_{\ell-1}$.

(3) $H_{(x_{n_{\ell}+1},\ldots,x_{n})}^i (Q_{\ell}) = 0$ for all $i<n-n_{\ell}$.
\end{prop}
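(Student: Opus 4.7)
The plan is to prove (2)$\Leftrightarrow$(3) via the standard local-cohomology / Koszul characterization of depth, and to prove (1)$\Leftrightarrow$(2) by controlling the associated primes of each $Q_\ell$ using Proposition 1.9 together with Schenzel's primary decomposition formula (Proposition 1.8(2)). For (2)$\Leftrightarrow$(3), I would invoke the familiar equivalence: for a homogeneous ideal $I=(y_1,\ldots,y_t)$ generated by exactly $t$ elements and a finitely generated graded module $N$, the elements $y_1,\ldots,y_t$ form an $N$-regular sequence if and only if $\mathrm{grade}(I,N)=t$, if and only if $H^i_I(N)=0$ for all $i<t$; this is applied with $N=Q_\ell$, $I=(x_{n_\ell+1},\ldots,x_n)$, and $t=n-n_\ell$.

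For (1)$\Rightarrow$(2), my first task is to identify $\mathrm{Ass}(Q_\ell)$. Writing $\mathrm{Ass}(M)=\{P_1,\ldots,P_s\}$ with $P_j=(x_1,\ldots,x_{i_j})$ and $i_1<\cdots<i_s$, a short bookkeeping calculation gives $n_\ell=i_{s-\ell+1}$, and Proposition 1.9 combined with Proposition 1.8(2) yields $M_\ell=\bigcap_{j\leq s-\ell}\mathcal Q_j$ and $M_{\ell-1}=M_\ell\cap\mathcal Q_{s-\ell+1}$, where $(0)=\bigcap_j\mathcal Q_j$ is a primary decomposition with each $\mathcal Q_j$ being $P_j$-primary. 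The second isomorphism theorem then embeds $Q_\ell$ into the $P_{s-\ell+1}$-coprimary module $M/\mathcal Q_{s-\ell+1}$, giving $\mathrm{Ass}(Q_\ell)=\{(x_1,\ldots,x_{n_\ell})\}$; in particular some power of $(x_1,\ldots,x_{n_\ell})$ annihilates $Q_\ell$ and each $x_j$ with $j>n_\ell$ is individually a non-zero-divisor on $Q_\ell$. To upgrade individual non-zero-divisor status to the regular-sequence property, I would exhibit $Q_\ell$ as a free $K[x_{n_\ell+1},\ldots,x_n]$-module by means of a Stanley-type decomposition, paralleling the analogous construction for Borel type monomial ideals in Herzog--Popescu--Vladoiu.

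For (2)$\Rightarrow$(1), I would reverse-engineer the associated primes of $M$. Every $P\in\mathrm{Ass}(Q_\ell)$ contains $x_{n_\ell}$ (because $M_\ell=\Gamma_{(x_{n_\ell})}(M)$) and contains $x_1$ (from the ambient hypothesis $\Gamma_{(x_1)}(M)=M$), while the regular-sequence assumption rules $x_{n_\ell+1},\ldots,x_n$ out of $P$. The maximality built into $n_\ell=\max\{j:\Gamma_{(x_j)}(M/M_{\ell-1})\neq 0\}$, together with an inductive walk along the sequential chain, squeezes each such $P$ to the full initial segment $(x_1,\ldots,x_{n_\ell})$. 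Since $\mathrm{Ass}(M)\subseteq\bigcup_\ell\mathrm{Ass}(Q_\ell)$ holds for any filtration, every associated prime of $M$ is then an initial-segment monomial prime, and Proposition 1.2(3) delivers (1).

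The main obstacle I expect is the regular-sequence upgrade in (1)$\Rightarrow$(2): coprimary-ness gives only individual non-zero-divisor status of each variable $x_{n_\ell+1},\ldots,x_n$ on $Q_\ell$, not a regular sequence in the prescribed order. The combinatorial Borel structure—realized through a Stanley decomposition, or equivalently an inductive construction that preserves the coprimary property under successive quotienting by $x_{n_\ell+1}$, then $x_{n_\ell+2}$, and so on—is indispensable here, and this is the technical heart of the argument.
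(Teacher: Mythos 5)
You have correctly located the crux: in the implication (1)$\Rightarrow$(2), knowing that $Q_\ell$ is $(x_1,\ldots,x_{n_\ell})$-coprimary only makes each $x_j$ with $j>n_\ell$ individually a non-zerodivisor, and a further argument is needed to obtain a regular sequence. (The paper's own proof elides exactly this point: it observes that $\bar Q_\ell$ is Artinian, concludes that $x_{n_\ell+1},\ldots,x_n$ is a system of parameters on $Q_\ell$, and then writes ``thus a regular sequence'' --- which is valid only when $Q_\ell$ is Cohen--Macaulay, the very thing at issue.) The trouble is that your proposed repair --- exhibiting $Q_\ell$ as a free $K[x_{n_\ell+1},\ldots,x_n]$-module via a Stanley-type decomposition as in Herzog--Popescu--Vladoiu --- is not available for modules. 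For a monomial ideal $I$ with $\mathrm{Ass}(S/I)=\{(x_1,\ldots,x_m)\}$ one indeed has $I=JS$ with $J\subset K[x_1,\ldots,x_m]$, so $S/I$ is free over $K[x_{m+1},\ldots,x_n]$; but an $(x_1,\ldots,x_m)$-coprimary $\mathbb Z^n$-graded module need not be Cohen--Macaulay at all. Concretely, for $n=3$ let
\[ Q=\mathrm{coker}\begin{pmatrix} x_1^2 & 0 & x_1x_3\\ 0 & x_1^2 & -x_1x_2\end{pmatrix}, \]
graded by $\deg e_1=(0,1,0)$, $\deg e_2=(0,0,1)$. Then $x_1^2Q=0$, $x_1Q\cong (x_2,x_3)K[x_2,x_3]$ and $Q/x_1Q\cong K[x_2,x_3]^2$, so $\mathrm{Ass}(Q)=\{(x_1)\}$ and $Q$ is of Borel type with sequential chain $0\subset Q$ and $n_1=1$; yet the minimal free resolution $0\to S\to S^3\to S^2\to Q\to 0$ gives $\depth Q=1<2=\dim Q$, and indeed $x_3\cdot(x_1e_1)=x_2\cdot(x_1e_2)$ while $x_1e_1\notin x_2Q$, so $x_2,x_3$ is not a regular sequence on $Q$.

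Consequently the implication (1)$\Rightarrow$(2) --- and with it Theorem 1.11 --- fails for general $\mathbb Z^n$-graded modules, and no argument along your lines (or the paper's) can close this gap without an extra hypothesis forcing each $Q_\ell$ to be Cohen--Macaulay (for instance restricting to cyclic modules $S/I$, where your Stanley-decomposition strategy does work). The rest of your outline is sound: (2)$\Leftrightarrow$(3) is the standard grade sensitivity of local cohomology; your identification $\mathrm{Ass}(Q_\ell)=\{(x_1,\ldots,x_{n_\ell})\}$ from Propositions 1.8(2) and 1.9 is correct; and the (2)$\Rightarrow$(1) direction via $\mathrm{Ass}(M)\subseteq\bigcup_\ell\mathrm{Ass}(Q_\ell)$ goes through. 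But the ``technical heart'' you flagged is not merely technical --- it is the point at which the statement itself breaks down.
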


\begin{proof}
For $(1)\Leftrightarrow (2)$, note that $\bar{Q}_{\ell}:=Q_{\ell}/((x_{n_{\ell}+1},\ldots,x_{n})Q_{\ell})$ has a structure of an Artinian multigraded $S_{\ell}=K[x_1,\ldots,x_{\ell}]$-module. Indeed, $\Gamma_{(x_1,\ldots,x_{\ell})S_{\ell}}(\bar{Q}_{\ell}) = \bar{Q}_{\ell}$, since $\Gamma_{(x_1,\ldots,x_{\ell})}(Q_{\ell}) = \Gamma_{(x_{\ell})}(Q_{\ell}) = \Gamma_{(x_{\ell})}(M_{\ell})/ \Gamma_{(x_{\ell})}(M_{\ell-1}) = M_{\ell}/M_{\ell-1}= Q_{\ell}$. Since $dim(Q_{\ell})=n-n_{\ell}$, it follows that $x_{n_{\ell}+1},\ldots,x_{n}$ is a system of parameters and, thus a regular sequence on $Q_{\ell}$. 

$(2)\Leftrightarrow (3)$ is a well known result of local cohomology.
\end{proof}

\begin{teor}
Let $M$ be a \Zng of Borel type. Then $M$ is sequentially Cohen-Macaulay.
\end{teor}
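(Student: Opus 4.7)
The plan is to recognize that all the pieces needed are already in place: the sequential chain from $(2)$ is essentially the dimension filtration, and Proposition~1.10 gives us the Cohen--Macaulayness of its factors. So the proof is an assembly job rather than a new calculation.

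First I would invoke Proposition~1.9 to identify the sequential chain with the dimension filtration of $M$. Since $M_{\ell} = \Gamma_{(x_{n_{\ell}})}(M) = D_{n - n_{\ell}}(M)$ and $n_1 > n_2 > \cdots > n_r$ strictly, the filtration $0 = M_0 \subset M_1 \subset \cdots \subset M_r = M$ is exactly the subsequence of the dimension filtration obtained by removing the repeated terms. In particular, the quotients $Q_{\ell} = M_{\ell}/M_{\ell-1}$ have strictly increasing dimensions $\dim(Q_{\ell}) = n - n_{\ell}$, which gives the strict dimension condition required by Definition~1.7.

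Next I would use the equivalence $(1)\Leftrightarrow(2)$ of Proposition~1.10: since $M$ is of Borel type, the sequence $x_{n_{\ell}+1},\ldots,x_n$ is regular on $Q_{\ell}$. Its length is $n - n_{\ell}$, which matches $\dim(Q_{\ell})$, so this regular sequence is a system of parameters for $Q_{\ell}$. Hence $\depth(Q_{\ell}) \geq n - n_{\ell} = \dim(Q_{\ell})$, and $Q_{\ell}$ is Cohen--Macaulay.

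Combining these, the sequential chain is a CM-filtration in the sense of Definition~1.7, so $M$ is sequentially Cohen--Macaulay (alternatively, one can cite Schenzel's Proposition~1.8(1), since we have already shown the dimension filtration is a CM-filtration). The only point that might need a brief verification is the reduction $\Gamma_{(x_1)}(M) = M$ needed to apply Proposition~1.10, but this is supplied by Proposition~1.6(2). I do not foresee a real obstacle; the main work was done in Propositions~1.9 and~1.10.
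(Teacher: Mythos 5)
Your proposal is correct and follows essentially the same route as the paper: the paper's (one-sentence) proof likewise combines the sequential chain with Proposition~1.10 to conclude that it is a CM-filtration with $\dim(M_{\ell}/M_{\ell-1}) = n - n_{\ell}$. You simply make explicit the details the paper leaves implicit (the strict increase of dimensions via $n_1 > \cdots > n_r$, and that a maximal regular sequence of length $\dim(Q_{\ell})$ forces $Q_{\ell}$ to be Cohen--Macaulay).
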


\begin{proof}
We consider the sequential chain of $M$. Then, according to the previous Proposition, the sequential chain of $M$ is the $CM$-filtration of $M$ and $dim(M_{\ell-1}/M_{\ell})=n-n_{\ell}$.
\end{proof}

Let $M$ be a finitely generated graded $S$-module. A prime filtration of $M$ is a filtration
\[\mathcal F:\; 0 = M_0 \subset M_1 \subset \cdots \subset M_r = M\]
such that $M_i/M_{i-1}\cong S/P_i$ for some $P_i\subset S$ prime. We denote $supp(\mathcal F) = \{P_1,\ldots,P_r\}$. It is well known, 
see \cite[Theoreom 6.5]{mats}, that $Ass(M)\subset Supp(\mathcal F)\subset Supp(M)$. Dress \cite{dress} calls a prime filtration $\mathcal F$ of $M$ \emph{clean}, if $supp(\mathcal F) = Min(M)$. The module $M$ is called \emph{clean}, if $M$ admits a clean filtration.

We recall the following definition, which generalize cleanness, introduced by Herzog and Popescu in \cite{hep}.

\begin{dfn}
We say that $M$ is pretty clean, if there exists a prime filtration, $0 = M_0 \subset M_1 \subset \cdots \subset M_r = M$
of $M$, whit $M_i/M_{i-1}\cong S/P_i$ for some prime ideal $P_i\subset S$, such that if $i<j$ and $P_i\subset P_j$, then $P_i=P_j$.
\end{dfn}

We denote $supp(\mathcal F)=\{P_1,\ldots,P_r\}$. It is well known, see \cite[Corollary 3.6]{hep}, that $Ass(M)=Supp(\mathcal F)$ if $M$ is pretty clean. 

\begin{lema}
Let $M$ be a \Zng. Suppose $x_n$ is regular on $M$. Then $M$ is clean if and only if $M/x_n M$ is a clean $S'=K[x_1,\ldots,x_{n-1}]$-module.
\end{lema}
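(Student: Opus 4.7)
For the forward direction, I would take a clean filtration $\mathcal F: 0 = M_0 \subset \cdots \subset M_r = M$ with $M_i/M_{i-1} \cong S/P_i$ and $\{P_1,\ldots,P_r\} = \mathrm{Min}(M)$. Since $x_n$ is $M$-regular, $x_n \notin P_i$ for every $i$, so $x_n$ is regular on each $S/P_i$. First I would prove by \emph{downward} induction on $i$ that $x_n$ is regular on $M/M_i$, using the short exact sequence $0 \to S/P_{i+1} \to M/M_i \to M/M_{i+1} \to 0$ together with regularity at both ends. This is equivalent to $M_i \cap x_n M = x_n M_i$, so $\bar M_i := (M_i + x_n M)/x_n M \cong M_i/x_n M_i$. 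Tensoring $0 \to M_{i-1} \to M_i \to S/P_i \to 0$ with $S/(x_n)$ and using $\mathrm{Tor}_1^S(S/P_i, S/(x_n)) = 0$ yields $\bar M_i/\bar M_{i-1} \cong S'/P_i'$, where $P_i' \subset S'$ is generated by the same variables as $P_i$. Since the $P_i$'s are minimal in $M$ and none contain $x_n$, the $P_i'$'s are pairwise incomparable monomial primes of $S'$ lying in $\mathrm{Supp}_{S'}(M/x_n M)$, so $\{P_i'\} = \mathrm{Min}_{S'}(M/x_n M)$; hence the induced filtration of $M/x_n M$ is clean.

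For the backward direction, I would argue by induction on the length $r$ of a clean filtration $\bar{\mathcal F}: 0 \subset \bar M_1 \subset \cdots \subset \bar M_r = M/x_n M$, with $\bar M_i/\bar M_{i-1} \cong S'/Q_i$ and $\{Q_i\} = \mathrm{Min}_{S'}(M/x_n M)$. Setting $P_i := Q_i S$, reversing the forward-direction analysis identifies $\{P_i\} = \mathrm{Min}_S(M) \subset \mathrm{Ass}(M)$. In the base case $r=1$, a multihomogeneous lift $e_1$ of a generator of $M/x_n M \cong S'/Q_1$ generates $M$ by graded Nakayama, and monomiality of $\mathrm{ann}_S(e_1)$ forces $\mathrm{ann}_S(e_1)=P_1$: $x_n$-factors peel off by $x_n$-regularity, while relations $x_{j_k} e_1 = x_n s\,e_1$ with $x_{j_k} \in Q_1$ give $x_{j_k} - x_n s \in \mathrm{ann}_S(e_1)$, whose monomial summand $x_{j_k}$ must also lie in the annihilator. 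In the inductive step, pick a multihomogeneous $z_1 \in M$ with $\mathrm{ann}_S(z_1) = P_1$ (available since $P_1 \in \mathrm{Ass}(M)$) chosen so that its image $\bar z_1$ generates the prescribed $\bar M_1$, and set $M_1 := S z_1 \cong S/P_1$. The Tor long exact sequence then delivers both $x_n$-regularity on $M/M_1$ and $(M/M_1)/x_n(M/M_1) \cong (M/x_n M)/\bar M_1$, which inherits a clean filtration of length $r-1$ from the tail of $\bar{\mathcal F}$. By the inductive hypothesis, $M/M_1$ is clean, and concatenation yields a clean filtration of $M$ with primes $\{P_1,\ldots,P_r\} = \mathrm{Min}(M)$.

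The main technical difficulty is the simultaneous selection of $z_1$ satisfying $\mathrm{ann}_S(z_1) = P_1$ and such that $\bar z_1$ generates all of $\bar M_1 \cong S'/Q_1$, rather than a proper cyclic submodule. The key fact is that $\bar M_1$ coincides with the $Q_1$-torsion of $M/x_n M$: since $Q_1$ is minimal and $\bar{\mathcal F}$ is clean, an element killed by a power of $Q_1$ cannot survive into any step $\bar M_i/\bar M_{i-1} \cong S'/Q_i$ with $i \geq 2$, because $Q_1 \not\subset Q_i$ and $S'/Q_i$ is a domain. Consequently every annihilator-witness of $P_1$ in $M$ projects into $\bar M_1$, and the residual task---choosing such a witness whose image is a cyclic generator---is resolved by combining multigradedness with the minimality of $P_1$.
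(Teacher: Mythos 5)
Your forward direction is correct and is essentially the paper's argument with the omitted details supplied: the paper likewise passes from a clean filtration of $M$ to the filtration by the modules $M_i/x_nM_i$ and simply asserts that the quotients become $S'/P_i'$ with $P_i'\in \mathrm{Min}(M/x_nM)$; your $\mathrm{Tor}$ computation and the incomparability argument identifying $\{P_i'\}$ with $\mathrm{Min}_{S'}(M/x_nM)$ make this precise. The backward direction is where you diverge from the paper (which takes preimages of the given filtration of $M/x_nM$ and tersely asserts the quotients are of the form $S/P$ with $P\in\mathrm{Min}(M)$), and it is also where your proposal has a genuine gap, located exactly at the step you yourself flag as ``the main technical difficulty'': the entire induction reduces to producing a multihomogeneous $z_1\in M$ with $\mathrm{ann}_S(z_1)=P_1$ whose image generates $\bar M_1$, and this existence is asserted, not proved.

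Concretely, neither of the two natural candidates works without further argument. (i) Since $\bar M_1=\Gamma_{Q_1}(M/x_nM)\cong S'/Q_1$ has $\dim_K \bar M_1/\mathfrak{m}'\bar M_1=1$, graded Nakayama does give a multihomogeneous generator $h$ of $\bar M_1$, and any multihomogeneous lift $z_1$ of $h$ satisfies $\mathrm{ann}_S(z_1)\subseteq P_1$ (by monomiality of the annihilator and $x_n$-regularity, as in your base case) and generates $\bar M_1$ modulo $x_nM$; but one only gets $Q_1z_1\subseteq x_nM$, not $Q_1z_1=0$. Writing $x_jz_1=x_nw_j$ for the variables $x_j\in Q_1$, one must correct $z_1$ by an element of $x_nM$ killing all the $w_j$ simultaneously, and the compatibility relations $x_kw_j=x_jw_k$ alone do not produce such a correction, because the variables in $Q_1$ are zerodivisors on $M$. (ii) Conversely, a witness $z$ of $P_1\in\mathrm{Ass}(M)$ does project into $\bar M_1$, and one may arrange $\bar z\neq 0$, but its image equals $f\cdot h$ for some $f\in S'\setminus Q_1$ that need not be a unit, so $S'\bar z$ may be a proper submodule of $\bar M_1$. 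Closing either route requires invoking the cleanness of $M/x_nM$ (equivalently, $\mathrm{Ass}_{S'}(M/x_nM)=\mathrm{Min}_{S'}(M/x_nM)$, so that $M/x_nM$ has no embedded primes) at precisely this point; your sketch never does so, and ``combining multigradedness with the minimality of $P_1$'' is a placeholder rather than an argument. Until this lifting step is supplied, the converse implication is not established.
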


\begin{proof}
Assume $M$ is clean and take $0 = M_0 \subset M_1 \subset \cdots \subset M_r = M$ a clean prime filtration of $M$. Then,
$0 = M_0/x_n M_0 \subset M_1/x_nM_1 \subset \cdots \subset M_r/x_nM_r = M/x_n M$
is a clean prime filtration of $M/x_nM$, as an $S'$-module. Indeed, if $M_i/M_{i-1}\cong S/P$, where $P\in Min(M)$, then $(M_i/x_n M_i) / (M_{i-1}/x_n M_{i-1}) \cong S'/P'$, where $P'=P\cap S \in Min(M')$.

For the converse, assume $M/x_nM$ is clean and take $0 = M_0/x_n M \subset M_1/x_nM \subset \cdots \subset M_r/x_n M = M/x_n M$
a clean prime filtration of $M/x_n M$ as an $S'$-module. This filtration is also a prime filtration of $M/x_n M$ as a $S$-module. In this regard, we have $(M_i/x_n M_i) / (M_{i-1}/x_n M_{i-1}) \cong M_i/M_{i-1} \cong S/P$, where $P\in Min(M)$, since $P'=P\cap S'\in Min(M')$. Therefore, $0 = M_0 \subset M_1 \subset \cdots \subset M_r = M$ is a clean prime filtration of $M$.
\end{proof}

\begin{teor}
Let $M$ be a \Zng of Borel type. Then $M$ is pretty clean.
\end{teor}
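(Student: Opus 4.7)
The plan is to construct a pretty clean filtration of $M$ by refining its sequential chain $0 = M_0 \subset M_1 \subset \cdots \subset M_r = M$ from (2). For each $\ell$, I shall insert between $M_{\ell-1}$ and $M_\ell$ a prime filtration of the quotient $Q_\ell := M_\ell/M_{\ell-1}$ whose every factor is isomorphic to $S/P_\ell$, where $P_\ell := (x_1,\ldots,x_{n_\ell})$. Since the indices satisfy $n_1>n_2>\cdots>n_r$, the primes form a strictly decreasing chain $P_1\supsetneq P_2\supsetneq\cdots\supsetneq P_r$; hence for any two factors $S/P_i,\, S/P_j$ of the resulting global filtration with $i<j$, either they lie in the same block (so $P_i=P_j$) or in distinct blocks (so $P_i\supsetneq P_j$, in particular $P_i\not\subseteq P_j$). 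In either case the condition $P_i\subseteq P_j\Rightarrow P_i=P_j$ of Definition 1.12 holds.

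To produce the required filtration of $Q_\ell$, I first invoke Proposition 1.10: the sequence $x_{n_\ell+1},\ldots,x_n$ is regular on $Q_\ell$, and $\bar Q_\ell := Q_\ell/(x_{n_\ell+1},\ldots,x_n)Q_\ell$ is an Artinian $\mathbb Z^{n_\ell}$-graded $S_\ell=K[x_1,\ldots,x_{n_\ell}]$-module. In the multigraded Cohen--Macaulay setting regular sequences can be permuted, so $x_n$ is itself regular on $Q_\ell$; iterating Lemma 1.13 along the full sequence --- its proof carries over verbatim to any regular variable, not just $x_n$ --- reduces the existence of a clean filtration of $Q_\ell$ as an $S$-module with every factor $S/P_\ell$ to the existence of a clean filtration of $\bar Q_\ell$ as an $S_\ell$-module with every factor $S_\ell/(x_1,\ldots,x_{n_\ell})$.

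Finally, since $\bar Q_\ell$ has finite length as an $S_\ell$-module, such a filtration is produced by iteratively extracting socle elements: pick a nonzero homogeneous $z\in\bar Q_\ell$ in a multidegree $\alpha$ with $(\bar Q_\ell)_{\alpha+e_i}=0$ for every $i\in[n_\ell]$ (which exists by finite-dimensionality), so that $x_i z=0$ for all $i$ and $S_\ell z\cong S_\ell/(x_1,\ldots,x_{n_\ell})$; then iterate on the strictly shorter quotient $\bar Q_\ell/S_\ell z$. The main obstacle in this approach is formalizing the iteration of Lemma 1.13 along a regular sequence of length greater than one, which is the one place where the permutation property of regular sequences in the graded Cohen--Macaulay setting must be used; beyond that, the argument is a straightforward assembly of the preceding results.
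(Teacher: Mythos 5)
Your proof is correct and follows essentially the same route as the paper: both reduce to showing that each factor $Q_\ell=M_\ell/M_{\ell-1}$ of the sequential chain (equivalently, of the dimension filtration) is clean, by killing the regular sequence $x_{n_\ell+1},\ldots,x_n$ via an iteration of Lemma 1.13 and observing that the resulting finite-length $S_\ell$-module is clean. The only divergence is the final assembly: the paper invokes \cite[Corollary 4.3(c)]{hep} to pass from cleanness of these factors to pretty-cleanness of $M$, whereas you concatenate the filtrations and check the condition of Definition 1.12 directly from $P_1\supsetneq\cdots\supsetneq P_r$, which is a slightly more self-contained finish.
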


\begin{proof}
Let $Ass(M)=\{P_1,\ldots,P_r\}$ and suppose $P_1\supset P_2\supset \cdots \supset P_r$. The submodule $(0)\subset M$ can be written as 
$(0)=\bigcap_{i=1}^r Q_i$, where $Q_i$ is $P_i$-primary. Set $U_i=\bigcap_{j>i}Q_j$. According to 1.6(2), $D_{d_i}(M)=U_i$, where $d_i=dim(S/P_i)$.

According to \cite[Corollary 4.3(c)]{hep} it is enough to prove that $U_i/Q_i\cap U_i$ is clean for all $i$. We have $\emptyset\neq Ass(U_i/Q_i\cap U_i) = Ass((U_i+ Q_i)/Q_i) \subset Ass(M/Q_i)=\{P_i\}$. Let $J_i=\{x_j|\;x_j\in P_j\}$ and $S'_i=K[J_j]$. Then $P'_i:=P_i\cap S_i$ is the graded maximal ideal of $S'_i$. Assume $P_i=(x_1,\ldots,x_r)$ and let $Q'_i = Q_i/(x_{r+1},\ldots,x_n)Q_i$ and $U'_i=U_i/(x_{r+1},\ldots,x_n)U_i$. The $S'_i$-module $U'_i/Q'_i\cap  U'_i$ is clean, since it is of finite length. We claim that $U_i/U_i\cap Q_i$ is also clean. Indeed, $x_{r+1},\ldots,x_n$ is a regular sequence on $Q_k$ for all $k\geq i$ and therefore, $x_{r+1},\ldots,x_n$ is a regular sequence of $U_i$. Now, since 
\[ (U_i/U_i\cap Q_i) /((x_{r+1},\ldots,x_n)(U_i/U_i\cap Q_i)) \cong U'_i/U'_i\cap Q'_i, \]
it follows by Lemma $1.8$ that $U_i/U_i\cap Q_i$ is also clean.
\end{proof}





\newpage
\section{Regularity of Borel type modules.}

\begin{dfn}
Let $M$ be a graded $S$-module. The \emph{Castelnuovo-Mumford regularity} $reg(M)$ of $M$ is
\[ \max_{i,j} \{j-i :\; \beta_{ij}(M)\neq 0\}.\]
\end{dfn}

Let $\{a_i(M)=sup\{j:H_{\mathbf m}^i(M)_j\neq 0\}$. It is well known, that $reg(M)=\max \{a_i(M)+i:\;i\geq 0\}$. Assume $M$ is sequentially Cohen-Macaulay with the $CM$-filtration $0 = M_0 \subset M_1 \cdots \subset M_r = M$. We set $d_i=dim(M_i/M_{i-1})$. We have the following obvious equations:

\begin{lema}
(a) $dim(M)=d_r$ and $depth(M)=d_1$;

(b) $a_{d_i}(M)= a_{d_i}(M_i/M_{i-1})$ for $i\in [r]$ and $a_j(M)=- \infty$ for $j\notin \{d_1,\ldots,d_r\}$. In particular, if follows that \[ reg(M) = \max\{reg(M_i/M_{i-1}):\;i\in[r] \} = \max\{a_{d_i}(M_i/M_{i-1}) + d_i:\;i\in [r] \}. \]
\end{lema}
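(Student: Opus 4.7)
My plan is to compute $H^j_\me(M)$ explicitly, degree by degree, from the CM-filtration and then read off all the claims. The essential inputs are the Cohen-Macaulayness of each quotient $M_i/M_{i-1}$, which ensures $H^j_\me(M_i/M_{i-1})=0$ for $j\neq d_i$, together with the strict chain $d_1<d_2<\cdots<d_r$ built into the definition of a CM-filtration.

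I would proceed by induction on $i$, establishing the assertion: $H^j_\me(M_i)=0$ for $j\notin\{d_1,\ldots,d_i\}$, and $H^{d_k}_\me(M_i)\cong H^{d_k}_\me(M_k/M_{k-1})$ for each $k\leq i$. The base case $i=1$ is immediate because $M_1=M_1/M_0$ is Cohen-Macaulay of dimension $d_1$. For the inductive step I would split into cases using the long exact sequence coming from
\[
0\to M_{i-1}\to M_i \to M_i/M_{i-1}\to 0.
\]
When $j\notin\{d_1,\ldots,d_i\}$, the terms $H^j_\me(M_i/M_{i-1})$ and $H^{j-1}_\me(M_i/M_{i-1})$ vanish, with the only delicate point being $j=d_i+1$: there I use the inductive vanishing $H^{d_i+1}_\me(M_{i-1})=0$, which is available because $d_i+1$ strictly exceeds $d_{i-1}$ and hence lies outside $\{d_1,\ldots,d_{i-1}\}$. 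When $j=d_k$ with $k<i$, the same vanishings on $M_i/M_{i-1}$ force $H^{d_k}_\me(M_i)\cong H^{d_k}_\me(M_{i-1})\cong H^{d_k}_\me(M_k/M_{k-1})$. When $j=d_i$, the inductive vanishings $H^{d_i}_\me(M_{i-1})=H^{d_i+1}_\me(M_{i-1})=0$ collapse the long exact sequence to an isomorphism $H^{d_i}_\me(M_i)\cong H^{d_i}_\me(M_i/M_{i-1})$.

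Specializing to $i=r$ gives (b) at once: $a_j(M)=-\infty$ for $j$ off the list, and $a_{d_i}(M)=a_{d_i}(M_i/M_{i-1})$. The regularity formula then falls out of $reg(M)=\max_j(a_j(M)+j)$ together with the Cohen-Macaulay identity $reg(M_i/M_{i-1})=a_{d_i}(M_i/M_{i-1})+d_i$. For (a), the same local-cohomology computation shows that the largest $j$ with $H^j_\me(M)\neq 0$ is $d_r$ and the smallest is $d_1$, using that $M_r/M_{r-1}$ and $M_1/M_0$ are nonzero Cohen-Macaulay modules, so $\dim M=d_r$ and $\depth M=d_1$. The main obstacle is pure bookkeeping in the long-exact-sequence step, particularly handling the degree shift $j\to j+1$ between consecutive terms; the strict inequality $d_1<\cdots<d_r$ from the definition of a CM-filtration is precisely what prevents different pieces of the filtration from interacting in cohomology and makes all the vanishings click.
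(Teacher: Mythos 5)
Your argument is correct and is precisely the standard long-exact-sequence computation that the paper leaves implicit (it states Lemma 2.2 without proof, calling the equations ``obvious''). One minor remark: in the vanishing case $j\notin\{d_1,\ldots,d_r\}$ there is no delicate point at all, since $H^j_{\me}(M_{i-1})=0$ and $H^j_{\me}(M_i/M_{i-1})=0$ already sandwich $H^j_{\me}(M_i)$ to zero; the term $H^{j-1}_{\me}(M_i/M_{i-1})$ is not needed there.
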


Let $\omega_S$ denote the canonical module of $S$. Sequentially $CM$-modules over $S$ can be characterized homologically as follows.

\begin{teor}
The following conditions are equivalent:

(a) $M$ is sequentially $CM$.

(b) for all $0\leq i\leq dim(M)$, the module $Ext^{n-i}_S(M,\omega_S)$ is either $0$ or Cohen-Macaulay of dimension $i$.

If the equivalent conditions hold and $0 = M_0 \subset M_1 \cdots \subset M_r = M$ is the $CM$-filtrations of $M$ with $d_i=dim(M_i/M_{i-1})$, then
\[ Ext_S^{n-d_i}(M,\omega_S) = Ext_R^{n-d_i}(M_i/M_{i-1},\omega_S)\;\;for\;\;i\in [r]  \]
and $Ext_S^{j}(M,\omega_S) = 0$ for $j\notin \{n-d_1,\ldots,n-d_r\}$.
\end{teor}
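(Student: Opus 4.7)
The key tool throughout is graded local duality: for a finitely generated graded $S$-module $N$, one has $H^i_{\mathbf m}(N)\cong \mathrm{Ext}^{n-i}_S(N,\omega_S)^{\vee}$, where ${}^{\vee}$ denotes the graded Matlis dual. This duality reverses cohomological degree via $i\mapsto n-i$, preserves dimension, and sends the canonical module of a CM module of dimension $d$ to a CM module of dimension $d$. Hence $N$ is CM of dimension $d$ iff $\mathrm{Ext}^j_S(N,\omega_S)=0$ for $j\neq n-d$ and $\mathrm{Ext}^{n-d}_S(N,\omega_S)$ is CM of dimension $d$; this settles the case $r=1$ of the theorem and will be applied repeatedly.

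For the direction (a)$\Rightarrow$(b), the plan is to induct on the length $r$ of the CM filtration. Applying $\mathrm{Hom}_S(-,\omega_S)$ to the exact sequence $0\to M_{r-1}\to M\to M/M_{r-1}\to 0$ produces a long exact sequence of Ext. The base case gives that $\mathrm{Ext}^j_S(M/M_{r-1},\omega_S)$ vanishes except at $j=n-d_r$, where it is CM of dimension $d_r$; the induction hypothesis supplies the analogous statements for $M_{r-1}$ at degrees $n-d_1,\ldots,n-d_{r-1}$. Since $d_1<\cdots<d_r$ forces $n-d_r<n-d_{r-1}<\cdots<n-d_1$, the two sets of nonvanishing degrees are disjoint, so the long exact sequence decouples into isomorphisms $\mathrm{Ext}^{n-d_r}_S(M,\omega_S)\cong\mathrm{Ext}^{n-d_r}_S(M/M_{r-1},\omega_S)$ and $\mathrm{Ext}^{n-d_i}_S(M,\omega_S)\cong\mathrm{Ext}^{n-d_i}_S(M_{r-1},\omega_S)$ for $i<r$, together with the vanishing outside $\{n-d_1,\ldots,n-d_r\}$. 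This simultaneously yields condition (b) and the displayed $\mathrm{Ext}$-formulas.

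For (b)$\Rightarrow$(a) I would invoke Schenzel's criterion (Proposition 1.9(1)) and verify that the dimension filtration $0\subset D_{d_1}(M)\subset\cdots\subset D_{d_r}(M)=M$ is a CM filtration, where $d_1<\cdots<d_r$ are the indices $i$ with $\mathrm{Ext}^{n-i}_S(M,\omega_S)\neq 0$ (equivalently, by local duality, the $i$ with $H^i_{\mathbf m}(M)\neq 0$). For each $k$, the short exact sequence $0\to D_{d_{k-1}}(M)\to D_{d_k}(M)\to D_{d_k}(M)/D_{d_{k-1}}(M)\to 0$, combined with the primary-decomposition description of $D_i(M)$ from Proposition 1.9(2), should force the quotient to have only one nonvanishing Ext module (namely in degree $n-d_k$), hence to be CM of dimension $d_k$. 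The main obstacle I anticipate is precisely this last matching step: one must verify that the single-degree concentration of $\mathrm{Ext}$ required by hypothesis (b) propagates through the long exact Ext-sequence of the dimension filtration so as to identify $\mathrm{Ext}^{n-d_k}_S(M,\omega_S)$ with $\mathrm{Ext}^{n-d_k}_S(D_{d_k}(M)/D_{d_{k-1}}(M),\omega_S)$. Once this identification is established, the CM property of each subquotient follows from the duality of the first paragraph, Schenzel's criterion gives (a), and the displayed $\mathrm{Ext}$-formulas then amount to a restatement of the isomorphisms already obtained in the (a)$\Rightarrow$(b) direction.
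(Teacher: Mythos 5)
First, a point of comparison: the paper gives no proof of this theorem at all. It is recalled as known background (the standard homological characterization of sequentially Cohen--Macaulay modules, due essentially to Schenzel and to Herzog--Popescu/Herzog--Sbarra), so there is no in-paper argument to measure yours against. Judged on its own, your proposal proves exactly half of the statement. The direction (a)$\Rightarrow$(b), together with the displayed Ext-isomorphisms and the vanishing of $\mathrm{Ext}^j_S(M,\omega_S)$ for $j\notin\{n-d_1,\ldots,n-d_r\}$, is argued correctly and completely: since $d_1<\cdots<d_r$, the nonvanishing cohomological degrees for $M/M_{r-1}$ and for $M_{r-1}$ are disjoint, the long exact Ext-sequence decouples into the asserted isomorphisms, and induction on $r$ finishes. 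This is also the only part of the theorem the paper actually uses later (in Theorem 2.4).

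The gap is in (b)$\Rightarrow$(a), and you have flagged it yourself: you never establish that each quotient $D_{d_k}(M)/D_{d_{k-1}}(M)$ of the dimension filtration has its Ext against $\omega_S$ concentrated in the single degree $n-d_k$, and this does not fall out of the long exact sequence alone. For $j>n-d_r$ the modules $\mathrm{Ext}^j_S(M,\omega_S)$ are in general nonzero, so to conclude that $M/D_{d_{r-1}}(M)$ is Cohen--Macaulay you must show that all of this higher Ext is absorbed by $D_{d_{r-1}}(M)$; "should force" is an assertion, not a deduction. The standard way to close this is to combine the grade inequality $\mathrm{Ext}^j_S(N,\omega_S)=0$ for $j<n-\dim N$ with the general dimension bound $\dim \mathrm{Ext}^j_S(N,\omega_S)\leq n-j$ and with the hypothesis in (b) that each nonzero $\mathrm{Ext}^{n-i}_S(M,\omega_S)$ is Cohen--Macaulay of dimension exactly $i$ (not merely $\leq i$), running a descending induction on dimension; without that extra input the matching step you identify as the obstacle really is unproved. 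So: (a)$\Rightarrow$(b) and the final formulas are sound, but the proposal is not a complete proof of the stated equivalence.
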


If $N$ is an Artinian $S$-module, we denote $s(N)=\max\{j:\; N_j\neq 0\}$.

\begin{teor}
Let $M$ be a \Zng of Borel type. Using the notations of Proposition $1.10$, we have:
\[ reg(M)=\max\{s(\bar{Q}_{1}),\ldots,s(\bar{Q}_{r})\}.\]
\end{teor}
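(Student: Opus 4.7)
The plan is to combine Theorem $1.11$ with Lemma $2.2$ to reduce the computation of $reg(M)$ to a sum over the factors $Q_{\ell}=M_{\ell}/M_{\ell-1}$ of the sequential chain, and then pass from each $Q_{\ell}$ to its Artinian quotient $\bar{Q}_{\ell}$ via the regular sequence $x_{n_{\ell}+1},\ldots,x_n$ supplied by Proposition $1.10$.

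First I would invoke Theorem $1.11$, which says that the sequential chain of $M$ is a $CM$-filtration, together with Lemma $2.2$(b), to write
\[ reg(M) = \max\{reg(Q_{\ell}):\;\ell\in[r]\}. \]
Thus it suffices to prove that $reg(Q_{\ell})=s(\bar{Q}_{\ell})$ for each $\ell$. By Proposition $1.10$, the sequence $\mathbf{y}=x_{n_{\ell}+1},\ldots,x_n$ is a regular sequence on $Q_{\ell}$, and $Q_{\ell}$ is Cohen-Macaulay of dimension $d_{\ell}=n-n_{\ell}$, so $\mathbf{y}$ is a system of parameters, whence $\bar{Q}_{\ell}=Q_{\ell}/\mathbf{y}Q_{\ell}$ is Artinian.

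Next I would show that quotienting a graded Cohen-Macaulay module by a regular sequence of degree-$1$ elements preserves regularity. For a single nonzerodivisor $y=x_{n_{\ell}+1}$ of degree one, the short exact sequence $0\to Q_{\ell}(-1)\xrightarrow{y}Q_{\ell}\to Q_{\ell}/yQ_{\ell}\to 0$ induces, since only $H^{d_{\ell}}_{\me}(Q_{\ell})$ is nonzero, the short exact sequence
\[ 0 \to H^{d_{\ell}-1}_{\me}(Q_{\ell}/yQ_{\ell})\to H^{d_{\ell}}_{\me}(Q_{\ell})(-1)\xrightarrow{y} H^{d_{\ell}}_{\me}(Q_{\ell})\to 0. \]
Looking in degree $j=a_{d_{\ell}}(Q_{\ell})+1$ shows $a_{d_{\ell}-1}(Q_{\ell}/yQ_{\ell})=a_{d_{\ell}}(Q_{\ell})+1$, so $reg(Q_{\ell}/yQ_{\ell})=reg(Q_{\ell})$. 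Iterating this argument over the regular sequence $\mathbf{y}$ gives $reg(\bar{Q}_{\ell})=reg(Q_{\ell})$.

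Finally, since $\bar{Q}_{\ell}$ is Artinian, $H^{0}_{\me}(\bar{Q}_{\ell})=\bar{Q}_{\ell}$ and all higher local cohomologies vanish, so $reg(\bar{Q}_{\ell})=a_0(\bar{Q}_{\ell})=s(\bar{Q}_{\ell})$. Combining the three reductions yields
\[ reg(M)=\max_{\ell\in[r]} reg(Q_{\ell})=\max_{\ell\in[r]} reg(\bar{Q}_{\ell})=\max_{\ell\in[r]} s(\bar{Q}_{\ell}). \]
The main obstacle is the middle step: one needs the precise equality (not just inequality) $reg(Q_{\ell}/yQ_{\ell})=reg(Q_{\ell})$ when $y$ is a linear nonzerodivisor, which relies crucially on the Cohen-Macaulayness of $Q_{\ell}$ so that the long exact sequence of local cohomology collapses to a single short exact sequence where the shift of one degree in the source of multiplication by $y$ produces exactly the required shift in $a_{d_{\ell}-1}$.
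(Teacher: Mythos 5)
Your proof is correct, but it follows a different route from the paper's. Both arguments begin the same way, reducing via Theorem 1.11 and Lemma 2.2 to the claim that the contribution of each factor $Q_{\ell}$ equals $s(\bar{Q}_{\ell})$; the divergence is in how that claim is established. The paper works on the dual side: it invokes Theorem 2.3 to identify $Ext_S^{n_{\ell}}(M,S(-n))$ with $Ext_S^{n_{\ell}}(Q_{\ell},S(-n))$, uses the regular sequence $x_{n_{\ell}+1},\ldots,x_n$ to relate this $Ext$ module to $Ext^{n_{\ell}}_{S_{\ell}}(\bar{Q}_{\ell},S_{\ell}(-n_{\ell}))$ with the appropriate degree shift, and then translates back to $a_{n-n_{\ell}}(M)$ by graded local duality. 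You instead stay entirely on the local cohomology side: the Cohen--Macaulayness of $Q_{\ell}$ collapses the long exact sequence for a linear nonzerodivisor to a single short exact sequence, giving the exact shift $a_{d_{\ell}-1}(Q_{\ell}/yQ_{\ell})=a_{d_{\ell}}(Q_{\ell})+1$ and hence $reg(Q_{\ell}/yQ_{\ell})=reg(Q_{\ell})$, after which the Artinian case is immediate since $H^0_{\me}(\bar{Q}_{\ell})=\bar{Q}_{\ell}$. Your version is more self-contained (it avoids local duality and the $Ext$-characterization of sequentially Cohen--Macaulay modules entirely, needing only Grothendieck vanishing/non-vanishing and the long exact sequence), whereas the paper's version gets the degree bookkeeping for the whole regular sequence in one step via the known behaviour of $Ext$ under passage to $S_{\ell}$ and stays closer to the template of the Herzog--Popescu argument for ideals of Borel type. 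The one point worth making explicit in your write-up is that after each quotient the module remains Cohen--Macaulay (of dimension one less) with the remaining variables still a regular sequence, so that the collapsing argument genuinely iterates; you assert this implicitly and it is true, but it is the hypothesis that carries the induction.
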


\begin{proof}
By Thereom $2.3$, we have $Ext_S^j(M,S(-n)) = 0$ for $j\notin \{n_0,\ldots,n_{r-1}\}$. 
\[ Ext^{n_i}_S(M,S(-n)) \cong Ext^{n_i}_S(M_{i}/M_{i-1},S(-n)),\;\;for\;i\in [r]. \]
Since $x_{n_i+1},\ldots,x_n$ is a regular sequence on $Q_i=M_{i}/M_{i-1}$, 
it follows that \[ s(Ext^{n_i}_{S_i}(\bar{Q_i}, S_i(-n_i))) = s(Ext^{n_i}_S(Q_i, S(-n))) + n - n_i,\] where $\bar{Q}_i=Q_i/ (x_{n_i+1},\ldots,x_n)Q_i$.
On the other hand, using local duality, we have $a_i(M)=sup\{j:\; Ext_S^{n-i}(M,S)_{-n-j}\neq 0\}$. Therefore, 
\[a_{n-n_i}(M) = a_{n-n_i}(M_i/M_{i-1}) = s(\bar{Q_j}) - n + n_i \]
and thus $reg(M)=\max\{a_{n-n_i}(M)+n-n_i:\;i\in[r]\} =\max\{s(\bar{Q}_{1}),\ldots,s(\bar{Q}_{r})\}$, as required.
\end{proof}

\begin{dfn}
Let $M$ be a \Zng with $t(M)=M$. We say that $M$ is \emph{strongly stable}, if $(0:_M x_i)\subset (0:_M x_j)$ 
for any $1\leq j<i\leq n$.
\end{dfn}

\begin{obs}
If $I\subset S$ is a monomial ideal and $M=S/I$, then $M$ is strongly stable if and only if $I$ is strongly stable.
\end{obs}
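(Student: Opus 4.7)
The plan is to reduce this remark to a direct translation between a module-theoretic colon condition and the classical exchange property for monomial ideals; no deep machinery is required, and I do not anticipate a substantive obstacle.

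First I would observe that for $M=S/I$ one has $(0:_M x_i)=(I:_S x_i)/I$, so the defining inequality of Definition 2.5, namely $(0:_M x_i)\subset (0:_M x_j)$ for all $1\leq j<i\leq n$, is equivalent to the ideal containment $(I:_S x_i)\subset (I:_S x_j)$. Since $I$ is a monomial ideal, both colon ideals are monomial, so this containment is equivalent to the combinatorial statement: for every monomial $u\in S$, if $x_i u\in I$ then $x_j u\in I$.

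Next I would compare this with the standard definition of a strongly stable monomial ideal: for every monomial $v\in I$ with $x_i\mid v$ and every $j<i$, the exchanged monomial $(x_j/x_i)v$ lies in $I$. The equivalence is then a direct substitution. Given $v\in I$ with $x_i\mid v$, set $u=v/x_i$ so that $x_i u=v\in I$, and the module condition yields $x_j u=(x_j/x_i)v\in I$. Conversely, given a monomial $u$ with $x_i u\in I$, apply the ideal condition to $v=x_i u\in I$ to obtain $(x_j/x_i)v=x_j u\in I$. The only minor point of care is that the ideal version is sometimes stated only for minimal monomial generators of $I$, but it is elementary that the generator-level condition propagates to all monomials of $I$, so the two formulations agree and the remark follows.
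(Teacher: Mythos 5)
Your proof is correct; the paper states this as a remark without proof, and your argument is exactly the intended unwinding: $(0:_{S/I}x_i)=(I:_Sx_i)/I$ translates the module condition into $(I:x_i)\subset(I:x_j)$, which is the classical strongly stable exchange property for the monomial ideal $I$. The side remark about passing from minimal generators to all monomials of $I$ is handled correctly and is the only point needing any care.
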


Let $M$ be a \Zng. Note that $M$ has an induced structure of $\mathbb Z$-graded $S$-module, given by 
$ M = \bigoplus_{j \in \mathbb Z} \left( \bigoplus_{|a|=j,\;a\in \mathbb Z^n} M_a \right)$ , where 
\linebreak $|a|=a_1+\cdots + a_n$.

If $e$ is an integer, we denote $M_{\geq e}$ the submodule of $M$ generated by the set \linebreak $\{z\in M$ homogeneous $:\;deg(z)\geq e\}$.

\begin{prop}
Let $M$ be a \Zng. If $M_{\geq e}$ is strongly stable for some $e\gg 0$ then $M$ is of Borel type.
\end{prop}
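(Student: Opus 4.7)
The plan is to invoke characterization (2) of Proposition 1.2, so that it suffices to verify $\Gamma_{(x_i)}(M)\subset\Gamma_{(x_j)}(M)$ for all $1\leq j<i\leq n$, after first confirming that $t(M)=M$. The latter comes essentially for free: the definition of strong stability requires $t(M_{\geq e})=M_{\geq e}$, and any homogeneous $z\in M$ of degree $d<e$ can be multiplied by $x_1^{e-d}$ either to land in $M_{\geq e}$ (where it is annihilated by some nonzero $u\in S$, so that $ux_1^{e-d}$ annihilates $z$) or to become zero (in which case $z$ is already torsion).

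The heart of the argument is the following iterated stability claim: if $w\in M_{\geq e}$ is homogeneous and $x_i^k w=0$, then $x_j^k w=0$ for every $j<i$. I would prove this by induction on $m$, showing $x_j^m x_i^{k-m}w=0$ for $0\leq m\leq k$. In the inductive step, put $w':=x_j^m x_i^{k-m-1}w$; its total degree equals $\deg(w)+k-1\geq e$, so either $w'=0$ or $w'\in M_{\geq e}$, and $x_i w'=x_j^m x_i^{k-m}w=0$ by the inductive hypothesis. Strong stability of $M_{\geq e}$ then gives $x_j w'=0$, which is exactly the next instance $x_j^{m+1}x_i^{k-m-1}w=0$.

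With this in hand, given a homogeneous $z\in\Gamma_{(x_i)}(M)$ with $x_i^k z=0$ and a fixed $j<i$, I would pick $N$ so large that $\deg(x_j^N z)\geq e$. If $x_j^N z=0$ then already $z\in\Gamma_{(x_j)}(M)$; otherwise $w:=x_j^N z\in M_{\geq e}$ still satisfies $x_i^k w=0$, so the iterated stability forces $x_j^k w=0$, i.e.\ $x_j^{N+k}z=0$, and again $z\in\Gamma_{(x_j)}(M)$. This gives criterion (2) of Proposition 1.2, completing the argument.

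The only genuinely delicate point is the degree bookkeeping that ensures the intermediate elements $x_j^m x_i^{k-m-1}w$ stay inside $M_{\geq e}$; conceptually, the proof is just a transfer of the classical fact ``strongly stable $\Rightarrow$ Borel type'' from ideals to modules, with the truncation $M_{\geq e}$ playing the role of the ideal, so I do not anticipate any serious obstacle.
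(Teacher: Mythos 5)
Your proposal is correct and follows essentially the same route as the paper: reduce to criterion (2) of Proposition 1.2 and iterate the strong stability of $M_{\geq e}$ to convert $x_i^k w=0$ into $x_j^k w=0$, with degree bookkeeping to keep the intermediate elements in $M_{\geq e}$ (the paper enlarges $k$ so that $\deg(z)+k-1\geq e$, whereas you multiply by $x_j^N$ first; both work). Your explicit check that $t(M)=M$ is a small point the paper leaves implicit, and it is handled correctly.
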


\begin{proof}
Let $1\leq j<i\leq n$ and let $z\in (0:_M x_i^{\infty})$ be a homogeneous element. It follows that $x_i^k z=0$ for some $k\gg 0$. We may assume $deg(z)+k-1\geq e$. Note that $x_i^{k-1}z \in (0:_{M_{\geq e}} x_i)$. It follows that $x_i^{k-1}z \in (0:_{M_{\geq e}} x_j)$, since $M_{\geq e}$ is strongly stable. Therefore $x_j x_i^{k-1}z = 0$ and thus $x_jx_i^{k-2}z\in (0:_{M_{\geq e}} x_n)$. Using the same procedure, finally, we get $x_j^{k-1}z\in (0:_{M\geq e} x_j)$ and therefore $x_j^k z =0$, thus $z\in (0:_M x_j^{\infty})$. By Proposition $1.2(2)$ we are done.
\end{proof}

\end{document}